\documentclass[11pt]{article}

\usepackage[utf8]{inputenc}
\usepackage[T1]{fontenc}
\usepackage{lmodern}
\usepackage[margin=1.13in]{geometry}
\usepackage{microtype}
\usepackage{amsmath,amssymb,amsthm,mathtools}
\usepackage{enumitem}
\usepackage{xcolor}
\usepackage[colorlinks=true,linkcolor=blue!50!black,citecolor=blue!50!black,urlcolor=blue!50!black]{hyperref}

\numberwithin{equation}{section}

\newtheorem{theorem}{Theorem}
\newtheorem{lemma}[theorem]{Lemma}

\theoremstyle{definition}

\theoremstyle{remark}

\newcommand{\Fn}{\mathcal F_n}

\newcommand{\1}{\mathbf 1}

\title{Optimality of Wouter van Doorn's Upper Bound for the Mayer--Erdős Farey Problem}
\author{Ricky Cipollini}
\date{July 25, 2026}

\begin{document}
\maketitle

\begin{abstract}
Let $\Fn$ be the Farey sequence of order $n$, written in increasing order.  Call two fractions
\[
  \frac ab<\frac cd
\]
badly ordered if $a<c$ and $b>d$.  Let $f(n)$ be the minimum number of Farey fractions strictly between two badly ordered fractions in $\Fn$.  We prove
\[
  f(n)=\left(\frac14+o(1)\right)n.
\]
In the equivalent indexing convention of Erdős Problem 1005, this determines the requested asymptotic constant as $c=1/4$.  The upper bound $f(n)\le n/4+O(1)$ was first obtained by Wouter van Doorn; the main result here is the matching lower bound.
\end{abstract}

\section{Introduction}

Let $\Fn$ denote the Farey sequence of order $n$, namely the increasing sequence of reduced fractions $a/b\in[0,1]$ with $b\le n$.  Following the language of similarly ordered Farey fractions, two fractions
\[
  \frac ab<\frac cd
\]
are \emph{badly ordered} if their numerators increase while their denominators decrease, that is,
\[
  a<c,\qquad b>d.
\]
Equivalently, the pair is not similarly ordered in the sense of the Mayer--Erdős problem.

We study the quantity
\[
  f(n)=\min\#\left\{\text{Farey fractions of }\Fn\text{ strictly between a badly ordered pair}\right\}.
\]
The problem is equivalent, up to this intervening-fractions convention, to Erdős Problem 1005, which asks whether the largest guaranteed range of similarly ordered Farey fractions is asymptotic to $cn$ for some positive constant $c$ \cite{ErdosProblem1005}.  Mayer initiated this question, and Erdős proved a linear lower bound \cite{Mayer1942,Erdos1943}.  In recent work, van Doorn sharpened the known bounds and proved in particular the upper bound
\[
  f(n)\le \frac n4+O(1),
\]
conjecturing that this upper bound is optimal \cite{vanDoorn2025}.  We prove that conjecture at the level of the asymptotic constant.

\begin{theorem}\label{thm:main}
Let $f(n)$ be the minimum number of Farey fractions strictly between two badly ordered fractions in $\Fn$.  Then
\[
  f(n)=\left(\frac14+o(1)\right)n.
\]
Consequently, Erdős Problem 1005 has asymptotic constant $c=1/4$.
\end{theorem}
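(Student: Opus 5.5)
The upper bound $f(n)\le n/4+O(1)$ is van Doorn's \cite{vanDoorn2025}, so the task is the lower bound $f(n)\ge (1/4-o(1))n$. Fix a badly ordered pair $a/b<c/d$ in $\Fn$, so $a<c$ and $b>d$. I will bound from below the number $N$ of Farey fractions in the open interval $I=(a/b,c/d)$. The count will be organized by denominators: for each $q\le n$, let $N_q$ be the number of reduced $p/q$ in $I$, so $N=\sum_q N_q$.

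The structural input is the following. Put $\Delta=cb-ad\ge 1$, so that $|I|=\Delta/(bd)$. Since $c\ge a+1$ and $b\ge d+1$, we get $\Delta = cb-ad \ge (a+1)b-ad = a(b-d)+b \ge b$. Hence $|I|\ge 1/d$. Moreover $d<b\le n$. There are two regimes.

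\textbf{Regime 1: $d$ is small, say $d\le \varepsilon n$.} Then $|I|\ge 1/(\varepsilon n)$. The interval therefore has length at least of order $1/(\varepsilon n)$, and it contains $\gg |I|\,n^2$ Farey fractions. For this I would use the standard count $\#\{p/q\in I: q\le n\}=\frac{3}{\pi^2}|I|n^2+O(n\log n)$. That yields $\gg n/\varepsilon$ fractions, which is far more than $n/4$. Some care is needed because the error term $O(n\log n)$ swamps the main term when $|I|$ is only about $1/n$. So instead I will count only the denominators $q\in(n/2,n]$ and use the elementary fact that an interval of length $L$ contains at least $\lfloor qL\rfloor-1$ fractions with denominator $q$. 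I then handle reducedness through the lattice structure described next, rather than through Möbius averaging.

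\textbf{Regime 2: $d\ge\varepsilon n$.} This is the main case and the main obstacle. The plan is to take the mediant-type fractions $(a+kc)/(b+kd)$ and $(ja+c)/(jb+d)$. Such a fraction is automatically reduced whenever $a/b,c/d$ are Farey neighbours, and that is not the case in general. More robustly, I would take all lattice points $(p,q)=s(a,b)+t(c,d)$ with $s,t>0$ and $q\le n$. These lie in the cone over $I$, and each primitive one gives a fraction in $I$. In general, when $\Delta>1$, the cone contains more lattice points than these, and this only helps. So I would work in the basis given by the Farey neighbours that bracket $I$. Concretely, let $a/b=x_0<x_1<\dots<x_m=c/d$ be the Farey fractions of $\mathcal F_{b}$ (or of a suitable order) lying in $[a/b,c/d]$, with consecutive pairs unimodular. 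For each consecutive pair $(p_i/q_i,p_{i+1}/q_{i+1})$, the fractions $(sp_i+tp_{i+1})/(sq_i+tq_{i+1})$ with $\gcd(s,t)=1$ and $sq_i+tq_{i+1}\le n$ are distinct reduced elements of $I$. The count is then
\[
N\ \ge\ \sum_i \#\{(s,t)\ \text{coprime},\ s,t\ge1,\ sq_i+tq_{i+1}\le n\}+(m-1).
\]
The key step is to show that this sum is at least $(1/4-o(1))n$, using the constraint that the endpoints are badly ordered. With only one unimodular pair $(b,d)$ with $b>d$, the worst case is $b\approx n$. The count is then essentially the number of $t$ with $t d\le n-b$, together with the chain $(a+kc)/(b+kd)$, and this chain is short. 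So the real content is that the bad order forces intermediate neighbours. By the bound $\Delta\ge b$, the path from $a/b$ to $c/d$ in the Stern--Brocot tree must pass through fractions with small denominators. I expect the extremal configuration to be the one van Doorn exhibits, with denominators near $n/2$ and $n$. So I would parametrize the two denominators of the mediant parent $x_j$ of largest height, $q_j\le d$, and count fractions of the form $(p_j+k p)/(q_j+kq)$ in each half. Optimizing over the ratios $b/n$, $d/n$ and $q_j/n$ should produce the constant $1/4$ as the minimum of a piecewise-linear function. The delicate part, where I expect most of the work, is making this optimization rigorous uniformly: controlling coprimality losses (I would count only $s=1$ or $t=1$ chains, which are automatically primitive) and showing that no configuration beats $n/4$ by more than $o(n)$.
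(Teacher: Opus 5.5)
There is a genuine gap: your proposal stops exactly where the proof has to begin. The chain decomposition over consecutive pairs of $\mathcal F_b\cap[a/b,c/d]$ is an exact identity. Every reduced fraction strictly inside a unimodular gap is uniquely $(sp_i+tp_{i+1})/(sq_i+tq_{i+1})$ with coprime $s,t\ge1$, so the decomposition carries no information by itself. For $b=n$ all the $(s,t)$-sums are empty, since $q_i+q_{i+1}>b=n$, and it merely restates $N$ as the number of interior points of $\mathcal F_n\cap[a/b,c/d]$.

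The bound $N\ge(1/4-o(1))n$ is left as ``the key step''. The one concrete tactic you offer cannot reach $1/4$: counting only the automatically primitive chains $(p_j+kp)/(q_j+kq)$ through the smallest-denominator fraction $x_j\in I$. For a counterexample to the tactic, take $n=b=10a+5$ with $6\mid a$ and $5\nmid a$. Then $a/b<(a+1)/(b-1)$ is a badly ordered pair in $\Fn$ whose interval contains $h/s=1/10$, with $u=bh-as=5$ and $v=(a+1)s-(b-1)h=6$. The neighbour chains of $1/10$ inside $I$ are $p/(10p+1)$ with $q>b/5$ and $p/(10p-1)$ with $q>(b-1)/6$. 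That gives only about $\frac n{10}\bigl(\frac45+\frac56\bigr)\approx0.163n$ fractions, whereas the true count is about $\frac n{10}\bigl(S(5)+S(6)\bigr)\approx0.327n$. The missing half lies on the lines $hq-sp=e$ with $e\ge2$, where primitivity holds only with density $\varphi(e)/e$. Coprimality losses therefore cannot be sidestepped; they have to be counted.

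Counting them is what the paper does.
\begin{itemize}
\item It first reduces to the elementary interval $I_{a,b}=(a/b,(a+1)/(b-1))$, which is sharper than your $|I|\ge 1/d$ because $(a+1)/(b-1)\le c/d$.
\item It then chooses a pivot $h/s$: either a rational with $s\le b^{2/3}$ inside $I_{a,b}$, or the endpoint of denominator $O(b^{1/3})$ of the $\mathcal F_{\lfloor b^{2/3}\rfloor}$-gap containing $I_{a,b}$.
\item It counts along all lines $hq-sp=e$ using the primitive-progression lemma. This yields either $\frac ns\{S(u)+S(v)\}$ with $u+v=s+h$, or $\frac bs\,\mu\{S(\mu(r+w))-S(\mu r)\}$, where $S(x)=\sum_{e<x}(1-e/x)\varphi(e)/e$.
\end{itemize}
The constant $1/4$ is therefore not the minimum of a piecewise-linear function of $b/n$, $d/n$, $q_j/n$. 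It is the arithmetic increment inequality $S(x+y)-S(x)\ge y/4$, which rests on $\Phi(m)\ge m(m+1)/4$. It is tight at $S(2)=1/2$, which is exactly van Doorn's example ($s=2$, $u=2$, $v=1$).

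Your Regime 1 is also not closed as written. For fixed $\varepsilon$, the $O(n\log n)$ error in the Farey count swamps the main term $\asymp n/\varepsilon$; the paper only uses that count for $b\le n/\log^2 n$. Counting non-reduced $p/q$ with $q\in(n/2,n]$ gives no lower bound on distinct reduced fractions without the same coprimality bookkeeping you postponed.
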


The proof is elementary but somewhat delicate.  The lower bound reduces every badly ordered pair to an elementary interval
\[
  \left(\frac ab,\frac{a+1}{b-1}\right),
\]
and then proves, uniformly in $a,b$, that this interval contains at least $n/4-o(n)$ Farey fractions of order $n$.  The key point is a robust one-dimensional increment estimate for a weighted totient sum.  The upper bound is included for completeness; it is essentially the construction of van Doorn.

\section{Reduction to elementary intervals}

Let
\[
  \frac ab<\frac cd,
  \qquad a<c,
  \qquad b>d
\]
be a badly ordered pair in $\Fn$.  Write
\[
  c=a+h,
  \qquad d=b-r,
\]
where $h,r\ge1$.  First $a\ne0$: if $a=0$, then the reduced fraction $a/b$ is $0/1$, so $b=1$, contradicting $b>d\ge1$.  Thus $a\ge1$.

Since $c/d\in[0,1]$, we have $c\le d$.  Hence
\[
  a+h=c\le d=b-r\le b-1,
\]
and therefore
\[
  1\le a\le b-2.
\]
Moreover,
\[
  \frac{a+1}{b-1}\le \frac{a+h}{b-r},
\]
because
\begin{align*}
  (a+h)(b-1)-(a+1)(b-r)
    &=(h-1)(b-1)+(r-1)(a+1)\ge0.
\end{align*}
Thus every badly ordered pair contains the elementary interval
\[
  I_{a,b}=\left(\frac ab,\frac{a+1}{b-1}\right),
\]
where
\[
  1\le a\le b-2,
  \qquad (a,b)=1,
  \qquad b\le n.
\]
It is therefore enough to prove, uniformly for all such $a,b$, that
\[
  N_n(a,b):=\#\bigl(\Fn\cap I_{a,b}\bigr)
  \ge \frac n4-o(n). \tag{2.1}\label{eq:target}
\]

\section{Auxiliary estimates}

We begin with a primitive progression count.  It is the only place where coprimality is counted explicitly.

\begin{lemma}[Primitive progressions]\label{lem:primitive-progression}
Let $(h,s)=1$, and let $e\ge1$.  The integer solutions of
\[
  hq-sp=e
\]
have $q$ in a single residue class modulo $s$.  Moreover, uniformly for real $A<B$,
\[
\#\{q:A<q<B,\\ (p,q)=1,\\ hq-sp=e\}
 =\frac{\varphi(e)}e\frac{B-A}{s}+O(\tau(e)).
\]
The same estimate holds for $sp-hq=e$.
\end{lemma}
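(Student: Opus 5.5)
First I would settle the residue-class claim.  Since $(h,s)=1$, reducing $hq-sp=e$ modulo $s$ gives $hq\equiv e \pmod s$.  This forces $q\equiv \bar h e \pmod s$, where $\bar h$ is the inverse of $h$ modulo $s$.  Conversely, every $q$ in this class determines a unique integer $p=(hq-e)/s$.  So the solution set is a single line $\{(q_0+ts,\,p_0+th): t\in\mathbb Z\}$, parametrized by $t$.

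Next I would handle the coprimality condition.  If $g\mid p$ and $g\mid q$, then $g\mid hq-sp=e$, so $(p,q)$ divides $e$.  I would therefore detect $(p,q)=1$ by Möbius inversion over divisors of $e$:
\[
\1_{(p,q)=1}=\sum_{g\mid (p,q)}\mu(g)=\sum_{g\mid e}\mu(g)\,\1_{g\mid p,\ g\mid q}.
\]
For fixed $g\mid e$, I would count the $t$ with $g\mid q_0+ts$ and $g\mid p_0+th$.  The key observation is that $(s,h)=1$ makes these two congruences compatible and equivalent to a single class of $t$ modulo $g$.  To see this, take a prime $\ell\mid g$.  If $\ell\mid s$, then $\ell\nmid h$, and the relation $hq\equiv sp\pmod \ell$ shows that $\ell\mid p$ already forces $\ell\mid q$.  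So the pair of conditions reduces to $\ell\mid p_0+th$, which is one class of $t$ modulo $\ell$.  If $\ell\nmid s$, the symmetric argument reduces it to one condition on $q$.  Hence the condition for general $g$ is that $t$ lies in a single residue class modulo $g$.  I would verify this carefully with CRT prime by prime, but it only needs to hold for squarefree $g$, since $\mu(g)=0$ otherwise.

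Combining these, for fixed squarefree $g\mid e$ the admissible $q$ form an arithmetic progression of modulus $gs$ inside $(A,B)$.  Their number is $(B-A)/(gs)+O(1)$.  Summing with the Möbius weights gives
\[
\sum_{g\mid e}\mu(g)\left(\frac{B-A}{gs}+O(1)\right)=\frac{\varphi(e)}{e}\,\frac{B-A}{s}+O\Big(\sum_{g\mid e}|\mu(g)|\Big).
\]
The error term is $O(2^{\omega(e)})=O(\tau(e))$, uniformly in $A,B,h,s$.  The case $sp-hq=e$ follows by the same argument, or by replacing $e$ with $-e$ throughout; the sign does not affect the divisibility argument.

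The only step needing care is the compatibility claim, namely that the two divisibility conditions modulo $g$ cut out exactly one residue class of $t$ rather than none.  The congruence relation $hq-sp=e\equiv 0\pmod \ell$ is exactly what guarantees this.  An empty class would only make the count smaller, but getting the exact main term requires exactly one class.  Everything else is routine counting.
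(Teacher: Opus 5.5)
Your proof is correct and follows the paper's argument: you use Möbius inversion over divisors of $(p,q)$, observe that these divide $e$, count a progression of difference $ds$ in $(A,B)$ for each divisor, and sum. The paper gets that progression more directly by writing $p=dP$, $q=dQ$ and applying the residue-class claim to $hQ-sP=e/d$. This avoids your prime-by-prime CRT compatibility check and works for every divisor, not only squarefree ones.
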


\begin{proof}
Since $(h,s)=1$, the congruence $hq\equiv e\pmod s$ puts $q$ in a single residue class modulo $s$.  By Möbius inversion,
\[
  \1_{(p,q)=1}=\sum_{d\mid(p,q)}\mu(d).
\]
If $d\mid p,q$, then $d\mid e$.  Write $p=dP$ and $q=dQ$.  Then
\[
  hQ-sP=e/d.
\]
Again $Q$ lies in a single residue class modulo $s$.  Hence the number of such $Q$ satisfying $A<dQ<B$ is
\[
  \frac{B-A}{ds}+O(1).
\]
Summing over $d\mid e$ gives
\begin{align*}
\sum_{d\mid e}\mu(d)\left(\frac{B-A}{ds}+O(1)\right)
  &=\frac{B-A}{s}\sum_{d\mid e}\frac{\mu(d)}d+O(\tau(e))\\
  &=\frac{\varphi(e)}e\frac{B-A}{s}+O(\tau(e)).
\end{align*}
The equation $sp-hq=e$ is identical after changing signs.
\end{proof}

\begin{lemma}[Uniform Farey count]\label{lem:farey-count}
Uniformly for intervals $J\subset[0,1]$,
\[
  \#(\Fn\cap J)=\frac{3}{\pi^2}|J|n^2+O(n\log n).
\]
Endpoint conventions for $J$ affect only the $O(n)$ term.
\end{lemma}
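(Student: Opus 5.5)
We prove the lemma by the standard Möbius-inversion count of lattice points under a line.

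Write $J$ with endpoints $\alpha\le\beta$ in $[0,1]$. For each denominator $q\le n$, the fractions $p/q\in J$ with $(p,q)=1$ are counted by
\[
  \sum_{\substack{p\in q J\\ (p,q)=1}}1=\sum_{d\mid q}\mu(d)\,\#\{P: dP\in qJ\}.
\]
Writing $q=dQ$, the inner count is the number of integers $P$ in $QJ$, which is $Q|J|+O(1)$. The $O(1)$ also absorbs whether the endpoints are open or closed; each endpoint contributes at most one fraction per denominator, hence at most $O(n)$ in total. This settles the remark on endpoint conventions.

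Summing over $q\le n$ and reorganizing by $d$ gives
\[
  \#(\Fn\cap J)=\sum_{d\le n}\mu(d)\sum_{Q\le n/d}\bigl(Q|J|+O(1)\bigr)
  =|J|\sum_{d\le n}\mu(d)\left(\frac{n^2}{2d^2}+O\!\left(\frac nd\right)\right)+O\Bigl(\sum_{d\le n}\frac nd\Bigr).
\]
The error terms are $O(n\log n)$, uniformly in $J$, since $|J|\le1$. The main term is
\[
  \frac{|J|n^2}{2}\sum_{d\le n}\frac{\mu(d)}{d^2}=\frac{|J|n^2}{2}\left(\frac{6}{\pi^2}+O\!\left(\frac1n\right)\right),
\]
which equals $\frac{3}{\pi^2}|J|n^2+O(n)$. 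Here we use $\sum_d\mu(d)d^{-2}=1/\zeta(2)=6/\pi^2$ and bound the tail by $\sum_{d>n}d^{-2}\ll 1/n$.

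Uniformity in $J$ holds because every $O(1)$ in the inner count is absolute; nothing depends on $\alpha$ or $\beta$ beyond $|J|\le1$. There is no real obstacle. The only care needed is in the bookkeeping of the $O(1)$ per pair $(d,Q)$: the number of such pairs is $\sum_{d\le n}n/d\ll n\log n$, which is the source of the logarithm in the error term.
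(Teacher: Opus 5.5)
Your proof is correct and is essentially the paper's argument: M\"obius inversion over $\gcd(p,q)$, the count $Q|J|+O(1)$ for each pair $(d,Q)$ with $dQ\le n$, the $O(n\log n)$ error from $\sum_{d\le n}n/d$, and the constant $6/\pi^2$ from $\sum_d\mu(d)d^{-2}$ with an $O(1/n)$ tail. Your bookkeeping $\sum_{Q\le n/d}Q=n^2/(2d^2)+O(n/d)$ is slightly more careful than the paper's $\lfloor n/d\rfloor^2/2$ step.
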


\begin{proof}
For each denominator $q$, the number of integers $p$ with $p/q\in J$ is $|J|q+O(1)$, uniformly in $J$.  By Möbius inversion,
\begin{align*}
\#(\Fn\cap J)
  &=\sum_{q\le n}\sum_{\substack{p/q\in J\\(p,q)=1}}1\\
  &=\sum_{d\le n}\mu(d)\sum_{r\le n/d}\bigl(|J|r+O(1)\bigr)\\
  &=|J|\sum_{d\le n}\mu(d)\frac{\lfloor n/d\rfloor^2}{2}
      +O\left(\sum_{d\le n}\frac nd\right)\\
  &=\frac{|J|n^2}{2}\sum_{d\le n}\frac{\mu(d)}{d^2}+O(n\log n).
\end{align*}
Since
\[
  \sum_{d=1}^{\infty}\frac{\mu(d)}{d^2}=\frac1{\zeta(2)}=\frac6{\pi^2},
  \qquad
  \sum_{d>n}\frac1{d^2}=O(1/n),
\]
the result follows.
\end{proof}

\begin{lemma}[Farey gaps]\label{lem:farey-gaps}
Let
\[
  \frac hs<\frac{h'}{s'}
\]
be consecutive fractions in $\mathcal F_Q$.  Then
\[
  h's-hs'=1,
  \qquad s+s'>Q,
\]
and the gap length is
\[
  \frac{h'}{s'}-\frac hs=\frac1{ss'}.
\]
\end{lemma}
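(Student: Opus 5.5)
We prove the three assertions in the order determinant, gap length, denominator sum, by induction on $Q$ using the mediant construction of $\mathcal F_Q$ from $\mathcal F_{Q-1}$. For $Q=1$ the only consecutive pair is $0/1<1/1$. Here $1\cdot1-0\cdot1=1$ and $1+1=2>1$, so the claims hold.

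For the inductive step, suppose consecutive fractions $h/s<h'/s'$ in $\mathcal F_{Q-1}$ satisfy $h's-hs'=1$. We first claim that every reduced fraction $x/y$ strictly between them has $y\ge s+s'$. Since $x/y>h/s$ and $x/y<h'/s'$, the integers
\[
u=xs-hy,\qquad v=h'y-xs'
\]
are both at least $1$. Solving this linear system by Cramer's rule, using the determinant $h's-hs'=1$, gives
\[
y=us'+vs\ge s+s',\qquad x=uh'+vh.
\]
Equality holds exactly when $u=v=1$, that is, for the mediant $(h+h')/(s+s')$. Consequently, when passing from order $Q-1$ to order $Q$:
\begin{itemize}
\item If $s+s'>Q$, no new fraction enters the gap. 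The pair stays consecutive, its determinant is still $1$, and $s+s'>Q$ already holds.
\item If $s+s'=Q$, exactly the mediant enters the gap, since every other fraction in the gap has denominator larger than $s+s'=Q$. The mediant is automatically in lowest terms, because $h'(s+s')-(h+h')s'=1$.
\end{itemize}

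In the second case the two new consecutive pairs $h/s<(h+h')/(s+s')$ and $(h+h')/(s+s')<h'/s'$ both have determinant $1$, by direct expansion. Their denominator sums are $2s+s'$ and $s+2s'$, and each exceeds $s+s'=Q$ because $s,s'\ge1$. This completes the induction for the relations $h's-hs'=1$ and $s+s'>Q$. The gap length then follows at once:
\[
\frac{h'}{s'}-\frac hs=\frac{h's-hs'}{ss'}=\frac1{ss'}.
\]

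The only step requiring care is the inequality $y\ge s+s'$. Once it is in hand, it guarantees that the refinement from $\mathcal F_{Q-1}$ to $\mathcal F_Q$ inserts at most the mediant into each gap.
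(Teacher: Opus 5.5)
Your proof is correct, but it follows a different route from the paper. You use the classical induction on $Q$ (as in Hardy--Wright). The Cramer's-rule identity $y=us'+vs$ shows that any fraction strictly inside a unimodular gap has denominator at least $s+s'$, with equality only for the mediant. So passing from $\mathcal F_{Q-1}$ to $\mathcal F_Q$ inserts at most the mediant into each gap, and unimodularity propagates.

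The paper instead argues directly at a fixed order $Q$. It gets $s+s'>Q$ because otherwise the mediant would be a member of $\mathcal F_Q$ strictly between the two fractions. It obtains $h's-hs'=1$ by contradiction from a lattice-geometry fact: if $\Delta\ge2$, the half-open parallelogram spanned by $(h,s)$ and $(h',s')$ contains $\Delta$ lattice points. Reflecting a nonzero one into the triangle then gives a fraction strictly between with denominator at most $\max(s,s')\le Q$.

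Your argument is entirely elementary, with no index-counting of lattice points, and it also yields the mediant description of how $\mathcal F_Q$ grows. The paper's argument avoids tracking the whole sequence and proves the two properties independently for each $Q$, at the cost of invoking the lattice-index fact.

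One small point of presentation: your case split into $s+s'>Q$ and $s+s'=Q$ is exhaustive only because the inductive hypothesis also carries $s+s'>Q-1$ for gaps of $\mathcal F_{Q-1}$. You should state explicitly that both relations are part of the hypothesis. Alternatively, note that $s+s'\le Q-1$ would put the reduced mediant into $\mathcal F_{Q-1}$ between consecutive terms.
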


\begin{proof}
If $s+s'\le Q$, then the mediant $(h+h')/(s+s')$ lies strictly between $h/s$ and $h'/s'$, and after reduction its denominator is at most $s+s'\le Q$, a contradiction.  Thus $s+s'>Q$.

Put $\Delta=h's-hs'$.  Since $h/s<h'/s'$, we have $\Delta\ge1$.  Suppose that $\Delta\ge2$.  Let $\mathbf u=(h,s)$ and $\mathbf v=(h',s')$.  The lattice generated by $\mathbf u,\mathbf v$ has index $\Delta$, so the half-open parallelogram
\[
  P=\{\alpha\mathbf u+\beta\mathbf v:0\le \alpha,\beta<1\}
\]
contains exactly $\Delta$ lattice points.  Hence it contains a nonzero lattice point $z$.  Write
\[
  z=\alpha\mathbf u+\beta\mathbf v,
  \qquad 0\le\alpha,\beta<1.
\]
If $\alpha+\beta>1$, replace $z$ by $\mathbf u+\mathbf v-z$.  We obtain a nonzero lattice point
\[
  (p,q)=\alpha\mathbf u+\beta\mathbf v
\]
with $0\le\alpha,\beta\le1$ and $\alpha+\beta\le1$.  This point cannot lie on either edge from the origin, because $\mathbf u$ and $\mathbf v$ are primitive.  Thus $\alpha,\beta>0$, and
\[
  \frac hs<\frac pq<\frac{h'}{s'}.
\]
Moreover,
\[
  q=\alpha s+\beta s'
    \le (\alpha+\beta)\max(s,s')
    \le \max(s,s')\le Q.
\]
After reducing $p/q$, the denominator does not increase, contradicting consecutiveness in $\mathcal F_Q$.  Therefore $\Delta=1$, and the displayed gap formula follows immediately.
\end{proof}

The next lemma is the engine behind the constant $1/4$.

\begin{lemma}[Totient increments]\label{lem:totient-increments}
For $x>0$ define
\[
  S(x)=\sum_{1\le e<x}\left(1-\frac ex\right)\frac{\varphi(e)}e,
\]
and put $S(0)=0$.  If $x\ge1$ and $y\ge1$, then
\[
  S(x+y)-S(x)\ge \frac y4.
\]
If $x\ge0$ and $y\ge2$, then the same inequality holds.  In particular, for every integer $m\ge2$,
\[
  S(m)\ge \frac m4.
\]
\end{lemma}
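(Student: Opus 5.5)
The plan is to turn $S(x+y)-S(x)$ into an integral and reduce the claim to a pointwise comparison of the totient summatory function with $t^2/4$, which fails only at finitely many small $t$. For $t>0$ put $\Phi(t)=\sum_{1\le e<t}\varphi(e)$.

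\textbf{Step 1: the integral formula.} On each interval $(k,k+1]$ the set of summation indices $\{e<x\}$ is fixed. There
\[
S(x)=\sum_{e<x}\frac{\varphi(e)}{e}-\frac1x\sum_{e<x}\varphi(e),
\]
which is differentiable with $S'(x)=\Phi(x)/x^2$. At $x=k$ the new term $(1-k/x)\varphi(k)/k$ vanishes, so $S$ is continuous. Since $\Phi(t)=0$ for $t\le1$, also $S(x)=0$ for $0<x\le1$, which agrees with $S(0)=0$. Hence for all $x\ge0$,
\[
S(x+y)-S(x)=\int_x^{x+y}\frac{\Phi(t)}{t^2}\,dt ,
\]
and the claim is equivalent to
\[
\int_x^{x+y}D(t)\,dt\ge0,\qquad D(t):=\frac{\Phi(t)}{t^2}-\frac14 .
\]

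\textbf{Step 2: $D\ge0$ for large $t$.} I would use the explicit form of the Möbius identity from the proof of Lemma~\ref{lem:farey-count}, namely $\sum_{e\le N}\varphi(e)=\tfrac12\bigl(1+\sum_{d\le N}\mu(d)\lfloor N/d\rfloor^2\bigr)$. Tracking constants rather than $O$-terms gives
\[
\Phi(t)\ge\frac{3}{\pi^2}t^2-C\,t\log t-C't
\]
with explicit small $C,C'$. Since $3/\pi^2-1/4\approx0.054>0$, this yields $D(t)\ge0$ for all $t\ge T_0$, with an explicit $T_0$ of modest size (a few hundred at most).

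\textbf{Step 3: finite verification.} On $(k,k+1]$, $\Phi$ is constant, equal to $\Phi_k=\sum_{e\le k}\varphi(e)$, so $\int_k^{k+1}\Phi_k t^{-2}\,dt$ is explicit. I would tabulate these for $k<T_0$ and locate where $D<0$. For example, $D<0$ on $(2\sqrt2,3]$, where $\Phi=2$. The worst window is obtained by starting $x$ at the beginning of a negative stretch and letting $y$ run at least $1$ to its end. It therefore suffices to check, for each negative stretch, that every window of length $\ge1$ beginning at or after $x=1$ has nonnegative integral. This is a finite computation, because beyond $T_0$ the integrand is nonnegative and only helps.

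For the case $x\ge0$, $y\ge2$, the only new situation is a window starting in $[0,1)$, where $D=-1/4$. Because $\int_0^1 D=-1/4$ and $\int_1^2 D=\int_1^2\bigl(t^{-2}-\tfrac14\bigr)\,dt=1/4$, the window $[0,2]$ balances exactly. Windows starting in $(0,1)$ and ending at or after $x+2$ are then handled by the same table. The final statement follows by taking $x=0$ and $y=m\ge2$.

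\textbf{Main obstacle.} The hard part is Step 3 together with making $T_0$ explicit in Step 2: the inequality is tight at $[0,2]$ and has real local deficits near $t\approx3$, so the small cases must be checked carefully. I would first try to show that past some small $t$, e.g.\ $t\ge4$, $D$ is nonnegative except on short isolated stretches that are immediately compensated by the previous unit interval. Failing that, I would simply present the explicit table of $\Phi_k$ and the window integrals for $k<T_0$.
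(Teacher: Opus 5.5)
Your Steps 1 and 2 are sound and mirror the paper. The quantity $\int_x^{x+y}D$ is exactly $F(x+y)-F(x)$ for the paper's $F(x)=S(x)-x/4$. Likewise, ``$D\ge0$ beyond $T_0$'' is the paper's inequality $\Phi(m)\ge(m+1)^2/4$, which in fact holds for all $m\ge7$. Proving it through the Möbius formula with explicit error terms works, but pushes the threshold into the low hundreds. The paper instead uses the union bound $2\Phi(m)-1\ge m^2\bigl(1-\sum_p p^{-2}\bigr)$ with $\sum_p p^{-2}<0.459$. That bound has a worse constant but no error term, so it gives threshold $24$ and needs only a short table.

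The gap is Step 3, which is where the substance of the lemma lies. It is not a finite computation as you state it, since each negative stretch admits a two-parameter continuum of windows. Your rule for the worst window (start where a negative stretch begins, run to where one ends) fails because the constraint $y\ge1$ is active. For $x\in(2,3)$ one has $\frac{d}{dx}\int_x^{x+1}D=\frac{4}{(x+1)^2}-\frac{2}{x^2}$, which vanishes at $x=1+\sqrt2$. The window $[1+\sqrt2,\,2+\sqrt2]$ therefore gives the minimum $\approx0.0735$ of $\int_x^{x+1}D$ over $x\in[2,3]$. This is below the value $\approx0.079$ at $x=2\sqrt2$, even though $D>0$ at both of its endpoints. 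So checking only the windows your rule selects would not prove the lemma.

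The missing idea is a structural reduction, which the paper supplies with two facts:
(i) $\int_k^{k+1}D=\Phi_k/(k(k+1))-\tfrac14\ge0$ for every integer $k\ge1$, i.e.\ $\Phi_k\ge k(k+1)/4$;
(ii) $D$ is decreasing on each $(k,k+1]$, i.e.\ $F$ is concave on unit intervals.
Together these give $F(t)\ge F(m+2)$ for all $t\ge m+2$. Now take $x\in[m,m+1]$:
\begin{itemize}
\item a window ending at or beyond $m+2$ is handled by $\max_{[m,m+1]}F\le F(m+2)$;
\item a window ending in $[x+1,m+2)$ is handled, via concavity on $[m+1,m+2]$, by $F(x+1)\ge F(x)$ together with that same maximum bound.
\end{itemize}
For $m\ge7$ both conditions are automatic. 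For $m\le6$ they reduce to a handful of explicit one-variable checks on rational functions.

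Your treatment of $x\in[0,1)$ is then fine. There $\int_x^zD=\tfrac x4+\int_2^zD$ for $z\ge2$, which is covered by the case $x\ge1$ when $z\ge3$. For $z\in[2,3]$ it is covered by the direct evaluation $\int_2^zD=\frac{(z-2)(4-z)}{4z}\ge0$.
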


\begin{proof}
Let
\[
  \Phi(m)=\sum_{j\le m}\varphi(j).
\]
We first prove the elementary lower bounds
\begin{equation}\label{eq:Phi-first}
  \Phi(m)\ge \frac{m(m+1)}4\qquad(m\ge1)
\end{equation}
and
\begin{equation}\label{eq:Phi-second}
  \Phi(m)\ge \frac{(m+1)^2}{4}\qquad(m\ge7).
\end{equation}
Let
\[
  P(m)=\#\{1\le u,v\le m:(u,v)=1\}.
\]
Counting coprime pairs according to $\max(u,v)$ gives $P(m)=2\Phi(m)-1$.  Indeed, $(1,1)$ contributes $1$, and for each $k\ge2$ the coprime pairs with maximum $k$ contribute $2\varphi(k)$.

If $(u,v)>1$, then some prime $p$ divides both $u$ and $v$.  Hence, by the union bound,
\[
  P(m)\ge m^2-\sum_{p\le m}\left\lfloor\frac mp\right\rfloor^2
        \ge m^2\left(1-\sum_p\frac1{p^2}\right).
\]
We use the explicit estimate
\begin{equation}\label{eq:prime-square-sum}
  \sum_p\frac1{p^2}<\frac{459}{1000}.
\end{equation}
For completeness, here is a verification.  Every prime larger than $97$ is an odd integer at least $99$, and therefore
\[
\sum_p\frac1{p^2}
\le \sum_{p\le97}\frac1{p^2}
   +\sum_{j\ge0}\frac1{(99+2j)^2}.
\]
The tail is bounded by
\[
\sum_{j\ge0}\frac1{(99+2j)^2}
\le \frac1{99^2}+\int_0^\infty\frac{dt}{(99+2t)^2}
=\frac1{99^2}+\frac1{198}<\frac6{1000}.
\]
An exact rational summation over
\[
2,3,5,7,11,13,17,19,23,29,31,37,41,43,47,53,59,61,67,71,73,79,83,89,97
\]
gives $\sum_{p\le97}p^{-2}<453/1000$, proving \eqref{eq:prime-square-sum}.  Thus
\[
  P(m)>\frac{541}{1000}m^2.
\]
For $m\ge24$,
\[
  \frac{541}{1000}m^2\ge\frac{(m+1)^2}{2}-1,
\]
because this is equivalent to $41m^2-1000m+500\ge0$.  Hence, for $m\ge24$,
\[
  2\Phi(m)-1=P(m)>\frac{(m+1)^2}{2}-1,
\]
and so $\Phi(m)>(m+1)^2/4$.

The remaining finite values needed for \eqref{eq:Phi-first} and \eqref{eq:Phi-second} are
\[
\begin{array}{c|rrrrrrrrrrrr}
 m&1&2&3&4&5&6&7&8&9&10&11&12\\ \hline
 \Phi(m)&1&2&4&6&10&12&18&22&28&32&42&46
\end{array}
\]
and
\[
\begin{array}{c|rrrrrrrrrrrr}
 m&13&14&15&16&17&18&19&20&21&22&23&24\\ \hline
 \Phi(m)&58&64&72&80&96&102&120&128&140&150&172&180.
\end{array}
\]
They verify both bounds.

For $x\in[m,m+1]$, $m\ge1$, we have
\[
  S(x)=A_m-\frac{\Phi(m)}x,
  \qquad A_m=\sum_{j\le m}\frac{\varphi(j)}j.
\]
This remains correct at integer endpoints, because the newly appearing term has coefficient zero.  Put
\[
  F(x)=S(x)-\frac x4.
\]
On $[m,m+1]$,
\[
  F(x)=A_m-\frac{\Phi(m)}x-\frac x4,
  \qquad
  F'(x)=\frac{\Phi(m)}{x^2}-\frac14,
  \qquad
  F''(x)=-\frac{2\Phi(m)}{x^3}<0.
\]
So $F$ is concave on each unit interval.  At integers $m\ge1$,
\[
  S(m+1)-S(m)=\frac{\Phi(m)}{m(m+1)},
\]
and therefore, by \eqref{eq:Phi-first},
\[
  F(m+1)-F(m)=\frac{\Phi(m)}{m(m+1)}-\frac14\ge0.
\]
Thus the integer values $F(m)$ are nondecreasing for $m\ge1$.  Furthermore, for $m\ge7$, \eqref{eq:Phi-second} implies throughout $x\in[m,m+1]$ that
\[
  F'(x)\ge \frac{\Phi(m)}{(m+1)^2}-\frac14\ge0.
\]
Hence $F$ is nondecreasing on $[7,\infty)$.

It remains to control small intervals.  Direct substitution gives, for $x\in[m,m+1]$ with $m=1,\ldots,6$,
\[
F(x+1)-F(x)=
\begin{cases}
\dfrac{x^2-3x+4}{4x(x+1)}, & m=1,\\[1.2ex]
\dfrac{5x^2-19x+24}{12x(x+1)}, & m=2,\\[1.2ex]
\dfrac{x^2-7x+16}{4x(x+1)}, & m=3,\\[1.2ex]
\dfrac{11x^2-69x+120}{20x(x+1)}, & m=4,\\[1.2ex]
\dfrac{(x-15)(x-8)}{12x(x+1)}, & m=5,\\[1.2ex]
\dfrac{17x^2-151x+336}{28x(x+1)}, & m=6.
\end{cases}
\]
Each numerator is positive on its indicated interval, so
\begin{equation}\label{eq:F-shift}
  F(x+1)\ge F(x)\qquad(x\ge1).
\end{equation}

We next prove that, for every integer $m\ge0$,
\begin{equation}\label{eq:max-bound}
  \max_{x\in[m,m+1]}F(x)\le F(m+2).
\end{equation}
For $m=0$, we have $S(x)=0$ on $[0,1]$, so $F(x)=-x/4$ and the maximum is $F(0)=0=F(2)$.  For $m=1,3,5$, the maximum occurs at the right endpoint, as follows from the displayed formula for $F'$ and the values $\Phi(1)=1$, $\Phi(3)=4$, and $\Phi(5)=10$.  For $m\ge7$, \eqref{eq:max-bound} follows from monotonicity of $F$ on $[7,\infty)$.  The remaining cases are direct:
\[
  \max_{x\in[2,3]}F(x)=\frac32-\sqrt2<F(4)=\frac16,
\]
\[
  \max_{x\in[4,5]}F(x)=\frac83-\sqrt6<F(6)=\frac3{10},
\]
and
\[
  \max_{x\in[6,7]}F(x)=\frac{19}{5}-2\sqrt3<F(8)=\frac{57}{140}.
\]
This proves \eqref{eq:max-bound}.

Now let $z=x+y$.  First suppose $x\ge0$ and $y\ge2$.  Choose $m\ge0$ with $x\in[m,m+1]$.  Then $z\ge x+2\ge m+2$.  Since the integer values $F(k)$ are nondecreasing for $k\ge1$, and since $m+2\ge2$, every integer endpoint at or after $m+2$ has value at least $F(m+2)$.  On each unit interval $F$ is concave, hence it lies above the chord joining its endpoint values.  Therefore every $t\ge m+2$ satisfies $F(t)\ge F(m+2)$.  Consequently
\[
  F(z)\ge F(m+2)\ge F(x),
\]
where the last inequality is \eqref{eq:max-bound}.  Hence
\[
  S(x+y)-S(x)=\frac y4+F(x+y)-F(x)\ge\frac y4.
\]

It remains to treat $x\ge1$ and $1\le y<2$.  Choose $m\ge1$ with $x\in[m,m+1]$.  If $z\ge m+2$, the previous paragraph gives $F(z)\ge F(x)$.  Otherwise
\[
  z\in[x+1,m+2]\subset[m+1,m+2].
\]
Since $F$ is concave on $[m+1,m+2]$, its minimum on $[x+1,m+2]$ is attained at one of the endpoints.  Thus
\[
  F(z)\ge \min\{F(x+1),F(m+2)\}.
\]
By \eqref{eq:F-shift}, $F(x+1)\ge F(x)$, and by \eqref{eq:max-bound}, $F(m+2)\ge F(x)$.  Hence again $F(z)\ge F(x)$, and the claimed increment inequality follows.  Finally, taking $x=0$ and $y=m\ge2$ gives $S(m)\ge m/4$.
\end{proof}

\section{The lower bound}

We now prove \eqref{eq:target}.  Throughout this section $1\le a\le b-2$, $(a,b)=1$, and $b\le n$.

First note that
\begin{equation}\label{eq:I-length}
  |I_{a,b}|=\frac{a+1}{b-1}-\frac ab=\frac{a+b}{b(b-1)}>\frac1b.
\end{equation}
If $b\le n/\log^2 n$, Lemma \ref{lem:farey-count} gives
\[
  N_n(a,b)=\frac3{\pi^2}|I_{a,b}|n^2+O(n\log n)
           \gg \frac{n^2}{b}-O(n\log n)
           \gg n\log^2 n.
\]
Thus $N_n(a,b)\ge n/4-o(n)$ in this range.  Henceforth assume
\begin{equation}\label{eq:b-large}
  b>\frac n{\log^2 n},
\end{equation}
and put
\[
  \mu=\frac nb.
\]
Then
\begin{equation}\label{eq:mu-range}
  1\le \mu<\log^2 n.
\end{equation}

\subsection{When the elementary interval contains a small rational}

Let
\[
  Q=\lfloor b^{2/3}\rfloor.
\]
Suppose that $I_{a,b}$ contains a reduced rational $h/s$ strictly inside it with $s\le Q$.  Since $I_{a,b}\subset(0,1)$, we have $1\le h<s$.  Define
\[
  u=bh-as,
  \qquad
  v=(a+1)s-(b-1)h.
\]
Because $h/s\in I_{a,b}$, both $u$ and $v$ are at least $1$, and
\begin{equation}\label{eq:u-plus-v}
  u+v=bh-as+(a+1)s-(b-1)h=s+h.
\end{equation}

We count fractions on the left of $h/s$.  Write
\[
  e=hq-sp>0.
\]
Then $p=(hq-e)/s$.  The condition $p/q>a/b$ is equivalent to
\[
  b(hq-e)>asq,
\]
or
\[
  uq>be.
\]
Therefore the left side contributes
\[
  \frac bs\sum_{1\le e<\mu u}\left(\mu-\frac eu\right)\frac{\varphi(e)}e
  +O\left(\sum_{e<\mu u}\tau(e)\right).
\]
Similarly, for fractions on the right of $h/s$, write $e=sp-hq>0$.  The upper condition $p/q<(a+1)/(b-1)$ is equivalent to
\[
  vq>(b-1)e.
\]
For a lower bound we impose the stronger condition $vq>be$.  Thus the right side contributes at least
\[
  \frac bs\sum_{1\le e<\mu v}\left(\mu-\frac ev\right)\frac{\varphi(e)}e
  +O\left(\sum_{e<\mu v}\tau(e)\right).
\]
Define
\[
  T_\mu(m)=\sum_{1\le e<\mu m}\left(\mu-\frac em\right)\frac{\varphi(e)}e.
\]
Combining the two sides and using \eqref{eq:u-plus-v},
\begin{equation}\label{eq:caseA-combine}
  N_n(a,b)\ge \frac bs\{T_\mu(u)+T_\mu(v)\}-O(\mu s\log n).
\end{equation}
Because $\mu\ge1$,
\[
  T_\mu(m)\ge \mu S(m).
\]
Indeed, for every $e<m$,
\[
  \mu-\frac em\ge \mu\left(1-\frac em\right),
\]
and the additional terms with $m\le e<\mu m$, if any, are nonnegative.  Hence
\begin{equation}\label{eq:caseA-S}
  N_n(a,b)\ge \frac ns\{S(u)+S(v)\}-O(\mu s\log n).
\end{equation}

We claim that
\begin{equation}\label{eq:Suv}
  S(u)+S(v)\ge \frac s4.
\end{equation}
If $u,v\ge2$, Lemma \ref{lem:totient-increments} gives
\[
  S(u)+S(v)\ge\frac{u+v}{4}=\frac{s+h}{4}\ge\frac s4.
\]
If one of $u,v$ equals $1$, then the other equals $s+h-1$.  Since $1\le h<s$, we have $s\ge2$ and $s+h-1\ge s\ge2$; Lemma \ref{lem:totient-increments} again gives \eqref{eq:Suv}.  From \eqref{eq:caseA-S},
\[
  N_n(a,b)\ge \frac n4-O(\mu s\log n).
\]
Since $s\le Q\le b^{2/3}$,
\[
  \mu s\log n\le \frac nb b^{2/3}\log n=n b^{-1/3}\log n.
\]
Using \eqref{eq:b-large}, this is at most $n^{2/3}\log^{5/3}n=o(n)$.  Therefore, in this case,
\[
  N_n(a,b)\ge \frac n4-o(n).
\]

\subsection{When no small rational lies inside}

Assume now that $I_{a,b}$ contains no reduced rational of denominator at most $Q=\lfloor b^{2/3}\rfloor$ strictly inside it.  Then $I_{a,b}$ lies in the closure of one gap of $\mathcal F_Q$.  Write this gap as
\[
  \frac hs<\frac{h'}{s'}.
\]
By Lemma \ref{lem:farey-gaps},
\[
  h's-hs'=1,
  \qquad s+s'>Q,
  \qquad \frac{h'}{s'}-\frac hs=\frac1{ss'}.
\]
Since $I_{a,b}$ lies inside the closure of the gap,
\[
  \frac1{ss'}\ge |I_{a,b}|>\frac1b.
\]
Thus
\begin{equation}\label{eq:ssprime}
  ss'<b.
\end{equation}
Let
\[
  D_{\min}=\min(s,s'),\qquad D_{\max}=\max(s,s').
\]
Because $D_{\min}D_{\max}<b$ and $D_{\min}+D_{\max}>Q$, we have $D_{\max}>Q/2$, and hence
\begin{equation}\label{eq:Dmin}
  D_{\min}<\frac{2b}{Q}=O(b^{1/3}).
\end{equation}
For sufficiently large $b$, $Q\ge b^{2/3}/2$, so
\begin{equation}\label{eq:Dmax}
  D_{\max}\gg b^{2/3}.
\end{equation}
Choose the endpoint of the Farey gap whose denominator is $D_{\min}$, and call it $h/s$.  Thus
\begin{equation}\label{eq:small-s}
  s=O(b^{1/3}).
\end{equation}
There are two cases, according to whether this small endpoint lies on the right or on the left of $I_{a,b}$.

\subsubsection{The small endpoint lies on the right}

Suppose
\[
  \frac ab<\frac{a+1}{b-1}\le \frac hs.
\]
Define
\[
  r=(b-1)h-(a+1)s\ge0,
  \qquad
  w=s+h.
\]
Then
\begin{equation}\label{eq:rw-right}
  bh-as=r+w.
\end{equation}
Since $0\le h\le s$ and \eqref{eq:small-s} holds,
\[
  w=O(b^{1/3}).
\]
Let the other endpoint of the Farey gap have denominator $s_0$.  By \eqref{eq:Dmax}, $s_0\gg b^{2/3}$.  Moreover,
\[
  0\le \frac hs-\frac{a+1}{b-1}=\frac{r}{(b-1)s}\le \frac1{ss_0},
\]
so
\[
  r\le \frac{b-1}{s_0}=O(b^{1/3}).
\]
Thus
\begin{equation}\label{eq:srw-small-right}
  s,w,r=O(b^{1/3}).
\end{equation}

Let $p/q\in I_{a,b}$.  Then $p/q<h/s$.  Write $e=hq-sp>0$.  The lower condition $p/q>a/b$ is equivalent, using \eqref{eq:rw-right}, to
\[
  q>\frac{be}{r+w}.
\]
If $r>0$, the upper condition $p/q<(a+1)/(b-1)$ is equivalent to
\[
  q<\frac{(b-1)e}{r}.
\]
Therefore Lemma \ref{lem:primitive-progression} gives
\begin{equation}\label{eq:right-Bmub}
  N_n(a,b)
  \ge \frac bs\mathcal B_{\mu,b}(r,w)-O(\mu(r+w)\log n),
\end{equation}
where
\[
  \mathcal B_{\mu,b}(r,w)
  =\sum_{e\ge1}
  \left(
  \min\left(\mu,\left(1-\frac1b\right)\frac er\right)
  -\frac e{r+w}
  \right)_+
  \frac{\varphi(e)}e.
\]
Here $x_+=\max(x,0)$.  A contributing $e$ satisfies $e<\mu(r+w)$, which explains the error term.

Compare this with
\[
  \mathcal B_{\mu}(r,w)
  =\sum_{e\ge1}
  \left(
  \min\left(\mu,\frac er\right)-\frac e{r+w}
  \right)_+
  \frac{\varphi(e)}e.
\]
A direct split at $e=\mu r$ gives the exact identity
\begin{equation}\label{eq:B-identity}
  \mathcal B_\mu(r,w)=\mu\{S(\mu(r+w))-S(\mu r)\}.
\end{equation}
Since $r\ge1$, $\mu r\ge1$; since $w\ge1$, $\mu w\ge1$.  Lemma \ref{lem:totient-increments} therefore gives
\begin{equation}\label{eq:B-lower}
  \mathcal B_\mu(r,w)\ge \frac{\mu^2w}{4}.
\end{equation}
The map $X\mapsto (X-c)_+$ is $1$-Lipschitz.  Replacing $e/r$ by $(1-1/b)e/r$ inside the minimum can decrease the summand by at most
\[
  \frac{e}{br}\cdot\frac{\varphi(e)}e=\frac{\varphi(e)}{br}.
\]
Only $e\le\mu(r+w)$ can contribute.  Hence
\[
  \mathcal B_{\mu,b}(r,w)
  \ge \mathcal B_\mu(r,w)
      -O\left(\frac1{br}\sum_{e\le\mu(r+w)}\varphi(e)\right)
  \ge \frac{\mu^2w}{4}
      -O\left(\frac{\mu^2(r+w)^2}{br}\right).
\]
Substituting in \eqref{eq:right-Bmub},
\begin{equation}\label{eq:right-r-positive}
  N_n(a,b)
  \ge \frac bs\cdot\frac{\mu^2w}{4}
      -O\left(\frac{\mu^2(r+w)^2}{sr}\right)
      -O(\mu(r+w)\log n).
\end{equation}
By \eqref{eq:mu-range} and \eqref{eq:srw-small-right}, the first error is $O(b^{2/3}\log^4 n)=o(n)$, and the second error is
\[
  O\left(\frac nb b^{1/3}\log n\right)=O(nb^{-2/3}\log n)=o(n)
\]
using \eqref{eq:b-large}.  Since $w=s+h\ge s$,
\[
  \frac bs\cdot\frac{\mu^2w}{4}\ge\frac{b\mu^2}{4}=\frac{n^2}{4b}\ge\frac n4.
\]
Therefore $N_n(a,b)\ge n/4-o(n)$ when $r>0$.

If $r=0$, then $(a+1)/(b-1)=h/s$, so the upper cutoff disappears.  The same determinant count gives
\[
  N_n(a,b)
  \ge \frac bs\sum_{e\ge1}\left(\mu-\frac ew\right)_+\frac{\varphi(e)}e
        -O(\mu w\log n).
\]
The sum equals $\mu S(\mu w)$.  Since the right endpoint $h/s$ lies to the right of $I_{a,b}\subset(0,1)$, we have $h\ge1$, hence $w=s+h\ge2$ and $\mu w\ge2$.  By Lemma \ref{lem:totient-increments},
\[
  S(\mu w)\ge\frac{\mu w}{4}.
\]
Thus
\[
  N_n(a,b)\ge \frac bs\cdot\frac{\mu^2w}{4}-O(\mu w\log n)
             \ge \frac n4-o(n).
\]
This completes the case where the small endpoint lies on the right.

\subsubsection{The small endpoint lies on the left}

Now suppose
\[
  \frac hs\le\frac ab<\frac{a+1}{b-1}.
\]
Define
\[
  r=as-bh\ge0,
  \qquad
  w=s+h.
\]
Then
\begin{equation}\label{eq:rw-left}
  (a+1)s-(b-1)h=r+w.
\end{equation}
As above, $s=O(b^{1/3})$, $w=O(b^{1/3})$, and if $s_0$ is the denominator of the other Farey-gap endpoint, then $s_0\gg b^{2/3}$.  Moreover,
\[
  0\le \frac ab-\frac hs=\frac r{bs}\le\frac1{ss_0},
\]
so
\begin{equation}\label{eq:srw-small-left}
  s,w,r=O(b^{1/3}).
\end{equation}

Let $p/q\in I_{a,b}$.  Then $p/q>h/s$.  Write $e=sp-hq>0$.  The upper condition $p/q<(a+1)/(b-1)$ gives, by \eqref{eq:rw-left},
\[
  (r+w)q>(b-1)e.
\]
We impose the stronger condition
\[
  q>\frac{be}{r+w}.
\]
If $r>0$, the lower condition $p/q>a/b$ is equivalent to
\[
  q<\frac{be}{r}.
\]
Therefore
\[
  N_n(a,b)
  \ge \frac bs\mathcal B_\mu(r,w)-O(\mu(r+w)\log n),
\]
where $\mathcal B_\mu$ is the sum in \eqref{eq:B-identity}.  Since $r\ge1$, Lemma \ref{lem:totient-increments} and \eqref{eq:B-identity} give
\[
  \mathcal B_\mu(r,w)\ge\frac{\mu^2w}{4}.
\]
The same error estimates as above, using \eqref{eq:srw-small-left}, yield
\[
  N_n(a,b)\ge\frac bs\cdot\frac{\mu^2w}{4}-o(n)\ge\frac n4-o(n),
\]
because $w\ge s$.

It remains to handle $r=0$.  Then $h/s=a/b$, and the lower cutoff disappears.  The determinant count gives
\[
  N_n(a,b)
  \ge \frac bs\sum_{e\ge1}\left(\mu-\frac ew\right)_+\frac{\varphi(e)}e
        -O(\mu w\log n).
\]
Again the sum is $\mu S(\mu w)$.  If $h=0$ and $s=1$, then $r=as-bh=a\ne0$, because $a\ge1$.  Hence, in the case $r=0$, we must have $h\ge1$, and so $w=s+h\ge2$.  Lemma \ref{lem:totient-increments} gives
\[
  S(\mu w)\ge\frac{\mu w}{4}.
\]
Consequently
\[
  N_n(a,b)\ge\frac bs\cdot\frac{\mu^2w}{4}-o(n)
             \ge\frac n4-o(n).
\]
This completes the no-small-rational case, and therefore proves the uniform lower bound \eqref{eq:target}.

\section{The upper bound}

The following construction gives $f(n)\le n/4+O(1)$.  As mentioned above, this upper bound was first obtained by van Doorn \cite{vanDoorn2025}; the proof is included to make the asymptotic conclusion self-contained.

Let
\[
  m=\left\lfloor\frac n4\right\rfloor
\]
and define
\[
  L=\frac{2m-1}{4m},
  \qquad
  R=\frac{2m}{4m-1}.
\]
Both fractions are reduced, and $4m\le n$, $4m-1\le n$, so $L,R\in\Fn$.  Moreover
\[
  L<R,
  \qquad 2m-1<2m,
  \qquad 4m>4m-1,
\]
so $L,R$ form a badly ordered pair.

We count the fractions $p/q\in\Fn$ satisfying $L<p/q<R$.  Write
\[
  e=q-2p.
\]
If $e=0$, then $p/q=1/2$, which contributes one fraction.

Suppose first that $e>0$.  Then $q=2p+e$, so $p/q<1/2<R$.  The condition $p/q>L$ is
\[
  4mp>(2m-1)q.
\]
Substituting $q=2p+e$ gives
\[
  2p>(2m-1)e.
\]
Also
\[
  q=2p+e\le n=4m+O(1).
\]
For $e=1$, these inequalities imply
\[
  m\le p\le2m+O(1),
\]
and every fraction $p/(2p+1)$ is reduced.  Hence $e=1$ contributes
\[
  m+O(1)=\frac n4+O(1)
\]
fractions.  For $e\ge2$, the inequalities
\[
  2p>(2m-1)e,
  \qquad
  2p+e\le4m+O(1)
\]
leave only $O(1)$ possibilities, since the available interval for $p$ has length $m(2-e)+O(1)$.

Now suppose $e<0$.  Write $e=-s$, $s\ge1$.  Then $q=2p-s$ and $p/q>1/2>L$.  The upper inequality $p/q<R$ is
\[
  (4m-1)p<2mq.
\]
Substituting $q=2p-s$ gives
\[
  p>2ms.
\]
Together with $q=2p-s\le n=4m+O(1)$, this implies
\[
  4ms-s<4m+O(1).
\]
Therefore only $s=1$ can contribute for large $m$, and even then the possible $p$ form an interval of length $O(1)$.  Thus the entire $e<0$ side contributes $O(1)$ fractions.

Consequently
\[
  \#\bigl(\Fn\cap(L,R)\bigr)=m+O(1)=\frac n4+O(1).
\]
Since $L,R$ are badly ordered,
\[
  f(n)\le \frac n4+O(1).
\]

\section{Conclusion}

Section 4 proves the uniform lower bound
\[
  f(n)\ge \frac n4-o(n),
\]
and Section 5 gives the explicit upper bound
\[
  f(n)\le \frac n4+O(1).
\]
Together these imply
\[
  f(n)=\left(\frac14+o(1)\right)n.
\]
Thus van Doorn's upper-bound constant is optimal, and the asymptotic form of Erdős Problem 1005 is resolved with constant $1/4$.

\section*{Acknowledgments and contribution statement}

This paper was written by GPT-5.5 Pro. The mathematical findings and proof strategy are due to Ricky Cipollini together with GPT-5.5 Thinking.  The upper bound $f(n)\le n/4+O(1)$ and the conjecture that this is the optimal constant are due to Wouter van Doorn \cite{vanDoorn2025}; that contribution is used here as the upper-bound half of the final asymptotic. The author also thanks Aristotle and van Doorn for the Lean 4 formalization of the proof. The formalization is available at \url{https://github.com/Woett/Lean-files/blob/main/ErdosProblem1005.lean} and can be type-checked online using \href{https://live.lean-lang.org/#project=mathlib-v4.28.0&url=https%3A%2F%2Fraw.githubusercontent.com%2FWoett%2FLean-files%2Fmain%2FErdosProblem1005.lean}{Lean Live}.


\begin{thebibliography}{9}

\bibitem{Mayer1942}
A.~E. Mayer,
\newblock A mean value theorem concerning Farey series,
\newblock \emph{Quarterly Journal of Mathematics} \textbf{os-13} (1942), no.~1, 48--57.

\bibitem{Erdos1943}
P.~Erdős,
\newblock A note on Farey series,
\newblock \emph{Quarterly Journal of Mathematics} \textbf{os-14} (1943), no.~1, 82--85.

\bibitem{vanDoorn2025}
W.~van Doorn,
\newblock Improved bounds for the Mayer--Erdős phenomenon on similarly ordered Farey fractions,
\newblock arXiv:2509.00121 [math.NT], 2025,
\newblock \url{https://arxiv.org/abs/2509.00121}.

\bibitem{ErdosProblem1005}
T.~F. Bloom,
\newblock Erdős Problem \#1005,
\newblock \url{https://www.erdosproblems.com/1005}.

\end{thebibliography}
\end{document}